\newcommand\p{\circle*{0.3}} %command for putting vertices
\newcommand\nt{\mathcal{NT}}
\newcommand\ent{\nt^{even}}
\newcommand\ntn[1]{\nt_{#1}}
\newcommand\entn[1]{\nt_{#1}^{even}}
\newtheorem{theorem}{Theorem}
\newtheorem{lemma}{Lemma}
\newtheorem{corollary}{Corollary}
\title{Colourability and word-representability of near-triangulations}
\author{Marc Elliot Glen}
\begin{document}
\hyphenation{rep-res-ent-able rep-res-ent-ability poly-omino poly-ominoes
tri-an-gu-la-tion tri-an-gu-la-tions col-our-abil-ity}

\maketitle

\begin{abstract}
A graph $G = (V,E)$ is word-representable if there is a word $w$ over the
alphabet $V$ such that $x$ and $y$ alternate in $w$ if and only if the edge $(x,
y)$ is in $G$. It is known~\cite{HKP2015} that all $3$-colourable graphs are
word-representable, while among those with a higher chromatic number some are
word-representable while others are not.

There has been some recent research on the word-representability of polyomino
triangulations. Akrobotu et al.~\cite{Akrobotu} showed that a triangulation of a
convex polyomino is word-representable if and only if it is $3$-colourable; and
Glen and Kitaev~\cite{GK}
%showed that the same result applies
extended this result to the case of
a rectangular polyomino triangulation when a single domino tile is allowed.

\begin{comment}
This paper
%introduces a new 
studies a 
class of graphs called near\hyp{}triangulations,
which is a grand generalization of polyomino triangulations. A
near\hyp{}triangulation is
% ^overfull hbox ;)
a planar graph in which each
bounded face is a triangle.
%vertex is either at the boundary of the outer face or the centrum of an induced wheel graph 
%(and each bounded face is triangulated; this is the old way of defining it from when they were called ``composite wheel graphs'')
We show that such a graph is $3$-colourable if and only if 
%it contains no induced odd wheels.
%all its centrums have even degree.
each induced wheel graph in it is even.
As a corollary to this result, we obtain a far-reaching generalization of the
previous results by Akrobotu et al. and Glen and Kitaev, which allows placement
on polyominoes of not just any number of arbitrary horizontal or vertical
dominoes, but also any number of arbitrary $n$-ominoes. From these results it
follows that the polyominoes studied by Akrobotu et al. need not be convex to
obtain
%the results;
their results; %?
it is sufficient that they avoid ``internal holes''.
\end{comment}

It was shown in~\cite{DKK} that a near-triangulation is $3$-colourable if and only
if it is internally even. This paper provides a much shorter and more elegant
proof of this fact, and also shows that near-triangulations are in fact a generalization
of the polyomino triangulations studied in~\cite{Akrobotu} and~\cite{GK}, and so
we generalize the results of these two papers,
and solve all open problems stated in~\cite{GK}.

\begin{comment}
In this paper we show that a known result on the $3$-colourability of
near-triangulations
helps to %?
generalize the results in~\cite{Akrobotu} and~\cite{GK}, and solves all open
problems stated in~\cite{GK}. We also provide a new proof for $3$-colourability
of near-triangulations.
\end{comment}
\end{abstract}

\section{Introduction}

% Suppose that $w$ is a word and $x$ and $y$ are two distinct letters in $w$.
%We say that $x$ and $y$ {\it alternate} in $w$ if the deletion of all other
%letters from the word $w$ results in either $xyxy\cdots$ or $yxyx\cdots$.
% ^ maybe leave out

A graph $G=(V,E)$ is called {\em word-representable} if there exists a word $w$
over the alphabet $V$ such that letters $x$ and $y$ alternate in $w$ if and only
if $(x,y)$ is an edge in $E$. For example, the cycle graph $C_4$ on 4 vertices
labelled by 1, 2, 3 and 4 consecutively can be represented by the word $14213243
$.
%There is a long line of research on word-representable graphs, which is
%summarized in the book~\cite{KL}. Two more papers~\cite{CKS,CKS2} closely
%related to this paper have appeared recently.

%This paragraph is on motivaion for studying WR-graphs, which the reviewer mentioned
%(from paper w/ Artem)
There is a long line of research on word-representable graphs, which is
summarized in the book~\cite{KL}. The roots of the theory of
word-representable graphs are in the study of the celebrated Perkins
semigroup~\cite{KS} which has played a central role in semigroup theory since
1960, particularly as a source of examples and counterexamples. Two more
papers~\cite{CKS,CKS2}, closely related to this paper's study of
word-representability of graphs with triangulated faces, have appeared recently.

A graph is $k$-colourable if its vertices can be coloured in at most $k$ colours
so that no pair of vertices with the same colour is connected by an edge.

\begin{theorem}[\cite{HKP2015}]\label{thm:3col} 
All $3$-colourable graphs are word-representable.
\end{theorem}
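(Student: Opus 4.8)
The plan is to invoke the known characterisation of word-representable graphs as exactly those admitting a \emph{semi-transitive orientation}: an acyclic orientation such that whenever $v_0 \to v_1 \to \cdots \to v_k$ is a directed path with $k \ge 2$ and $v_0 \to v_k$ is an arc, every arc $v_i \to v_j$ with $0 \le i < j \le k$ is also present. With this in hand the theorem reduces to producing such an orientation for an arbitrary $3$-colourable graph $G$.

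First I would fix a proper colouring $c \colon V \to \{1,2,3\}$ and orient each edge $\{x,y\}$ from its lower-coloured endpoint to its higher-coloured one. Colours strictly increase along every arc, so the orientation is acyclic; moreover any directed path visits a strictly increasing sequence of colours drawn from $\{1,2,3\}$, so no directed path has more than two arcs.

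Next I would verify the semi-transitivity condition. The only directed paths of length at least $2$ are of the form $v_0 \to v_1 \to v_2$ with $c(v_0)=1$, $c(v_1)=2$, $c(v_2)=3$. If $\{v_0,v_2\}$ is an edge it is oriented $v_0 \to v_2$, and then the arcs $v_0 \to v_1$, $v_1 \to v_2$, $v_0 \to v_2$ — which are all the arcs $v_i \to v_j$ with $i<j$ — are present. So the condition holds (vacuously, there being no room for a ``shortcut''), the orientation is semi-transitive, and $G$ is word-representable.

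The one genuinely substantive ingredient is the equivalence between word-representability and semi-transitive orientability, which is a theorem in its own right; a proof that sidesteps it would have to exhibit a representing word directly. For $3$-colourable graphs such a word can be taken $2$-uniform and organised around the middle colour class, with the colour-$1$ and colour-$3$ vertices occurring in opposite relative orders in their first and second passes so as not to alternate — the word $bacbca$ representing the path $a - b - c$ being the smallest instance. Checking that such a word realises exactly the edges of $G$, and no non-edges, would be the technical core of a self-contained argument, and I expect it to reduce, as above, to the fact that colours never decrease along the relevant occurrence patterns.
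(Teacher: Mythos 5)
Your proposal is correct: the paper does not prove this statement itself (it is quoted from \cite{HKP2015}), and your argument---orient every edge from the lower-coloured to the higher-coloured endpoint, observe that directed paths then have at most two arcs so no shortcut can arise, and invoke the equivalence of word-representability with semi-transitive orientability---is exactly the standard proof given in that reference. The only caveat is the one you already flag: the argument is only as self-contained as the semi-transitivity characterisation it rests on.
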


A {\em wheel} $W_n$ is obtained from the cycle $C_n$ by adding one
%all-adjacent <- the reviewer said this is not very standard
universal
vertex. See Figure~\ref{wheel} for $W_5$.

\begin{comment}
%We note that, for $k\geq 4$, there are examples of non-word-representable graphs
%that are $k$-colourable, but not 3-colourable. For example, the wheel $W_5$ on 6
%vertices is such a graph.

%As examples of $3$-colourable and non-$3$-colourable graphs, all even wheel
%graphs $W_2n$ on $2n + 1$ vertices for $n \geq 1$ are $3$-colourable, while all
%odd wheel graphs are not $3$-colourable. It follows from~\cite{KP} that all odd
%wheel graphs except for $W_5$ are non-word-representable.

%Note that, for $k\geq 4$, there are examples of non-word-representable graphs
%that are $k$-colourable, but not 3-colourable. For example, following
%from~\cite{KP} all odd wheel graphs $W_{2n+1}$ on $2n+2$ vertices for $n \geq 2$
%are such graphs. By contrast, all even wheels are $3$-colourable and
%word-representable, while the only non-$3$-colourable word-representable wheel
%graph is $W_3$.
\end{comment}
Note that, for $k\geq 4$, there are examples of non-word-representable graphs
that are $k$-colourable, but not 3-colourable. For example, following
from~\cite{KP} all wheels $W_n$ for odd $n > 3$ are such graphs; in contrast,
all even wheels are $3$-colourable (and so word-representable). Note that $W_3 =
K_4$ is the complete graph on four vertices, and the only word-representable odd
wheel. $W_5$ is known to be the smallest non-word-representable graph.

%%Reviewer said to remove this line, and just call them wheels and cycles from start
%In this paper, odd (resp., even) wheel graphs are
%called odd (resp., even) wheels.

A {\em near-triangulation} is a
planar
%Reviewer said to use "plane" but planar seems more standard and describes what im talking about
graph in which each inner bounded face is
a triangle (where the outer face may possibly not be a triangle).
In other words, a near-triangulation is a generalization of a triangulation
that may have one untriangulated face.
Let $\ntn{n}$ denote the class of near-triangulations on $n$ vertices, and
let $\entn{n}$ denote members of $\ntn{n}$
%in which each induced wheel is even.
which are {\em internally even}, that is, each inner vertex
(%that is,
vertex not incident to the outer face)
has even degree.
%called even near-triangulations.
See Figure~\ref{wheel} for examples of graphs in $\ntn{7}$ and $\entn{7}$.
We let $\nt = \bigcup_{n \geq 0} \ntn{n}$ and
$\ent = \bigcup_{n \geq 0} \entn{n}$.
It is known~\cite{DKK} that a near-triangulation is $3$-colourable if and only
if
it belongs to $\ent$.
%it is internally even.
This result is of special importance in our paper, and we decided to provide a
new and much more elegant
proof of it (see Theorem~\ref{thm:even3col}).

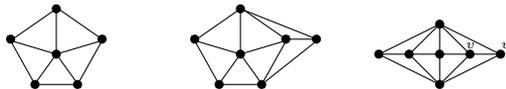
\begin{figure}[h]
 \begin{center}
  \begin{picture}(4,4)
   %\put(0,0){
    \put(0.8,0){\p}	\put(2.2,0){\p}	\put(0,1.5){\p}	\put(3,1.5){\p}	\put(1.5,2.5){\p}
    \put(1.5,1){\p}
    
    \put(0.8,0){\line(1,0){1.4}}
    \put(2.2,0){\line(1,1.8){0.9}}
    \put(3,1.5){\line(-1,0.7){1.4}}
    \put(1.5,2.5){\line(-1,-0.7){1.4}}
    \put(0.8,0){\line(-1,1.8){0.9}}
    
    \put(1.5,1){\line(0,1){1.4}}
    \put(1.5,1){\line(1,0.3){1.4}}
    \put(1.5,1){\line(-1,0.3){1.4}}
    \put(1.5,1){\line(1,-1.4){0.8}}
    \put(1.5,1){\line(-1,-1.4){0.8}}
   %}
  \end{picture}
  %\hfill
  \qquad
  \begin{picture}(5,4)
   %\put(5,0){
    \put(0.8,0){\p}	\put(2.2,0){\p}	\put(0,1.5){\p}	\put(3,1.5){\p}	\put(1.5,2.5){\p}
    \put(1.5,1){\p}
    
    \put(4,1.5){\p}
    
    \put(0.8,0){\line(1,0){1.4}}
    \put(2.2,0){\line(1,1.8){0.9}}
    \put(3,1.5){\line(-1,0.7){1.4}}
    \put(1.5,2.5){\line(-1,-0.7){1.4}}
    \put(0.8,0){\line(-1,1.8){0.9}}
    
    \put(1.5,1){\line(0,1){1.4}}
    \put(1.5,1){\line(1,0.3){1.4}}
    \put(1.5,1){\line(-1,0.3){1.4}}
    \put(1.5,1){\line(1,-1.4){0.8}}
    \put(1.5,1){\line(-1,-1.4){0.8}}
    
    \put(3,1.5){\line(1,0){1}}
    \put(2.2,0){\line(1,0.8){1.8}}
    \put(1.5,2.5){\line(1,-0.4){2.4}}
   %}
  \end{picture}
  %\hfill
  \qquad
  \begin{picture}(4,4)
  % \put(11,0){
    \put(1,0){\p}	\put(0,1){\p}	\put(1,1){\p}	\put(2,1){\p}	\put(1,2){\p}
    
    \put(1,0){\line(0,1){2}}
    \put(-1,1){\line(1,0){4}}
    
    \put(1,0){\line(1,1){1}}
    \put(2,1){\line(-1,1){1}}
    \put(1,2){\line(-1,-1){1}}
    \put(0,1){\line(1,-1){1}}
    
    \put(-1,1){\p}
    \put(3,1){\p}
    
    \put(1,0){\line(2,1){2}}
    \put(1,0){\line(-2,1){2}}
    \put(1,2){\line(2,-1){2}}
    \put(1,2){\line(-2,-1){2}}
    
    \put(1.85,1.2){\tiny $v$}
    \put(3,1.2){\tiny $w$}
   %}
  \end{picture}

 \end{center}
\caption{From left to right: the wheel graph $W_5$, a simple near-triangulation
in $\ntn{7}$ and one in $\entn{7}$. The vertex labelled $v$ is an example of an
inner vertex, and is the centre of a wheel $W_4$, and the vertex labelled $w$
is an example of a boundary vertex.}\label{wheel}
\end{figure}

The motivation for
%defining and
%studying
considering
near-triangulations
%this class of graphs
comes from a subclass of these graphs, called polyomino triangulations, which
have been studied recently by Akrobotu et al.~\cite{Akrobotu} and Glen and
Kitaev~\cite{GK}, and which have had related work dealing with triangulations
and face subdivisions in the recent papers~\cite{CKS,CKS2}. Polyomino
triangulations will be defined in Section~\ref{sec:tri}. Akrobotu et al. showed
that a triangulation of a convex polyomino is word-representable if and only if
it is $3$-colourable, and
%similarly
%Glen and Kitaev showed that a triangulation
%of a rectangular polyomino with a single domino tile is word-representable if
%and only if it is $3$-colourable.
Glen and Kitaev
%showed that the same result applies
extended this result to the case of
a rectangular polyomino triangulation with a single domino.
This paper shows in Theorem~\ref{thm:nt} that a near-triangulation, that avoids
the complete graph $K_4$ as an induced subgraph, is word-representable if and
only if it is $3$-colourable, and so we obtain an elegant generalization of the
%results of both Akrobotu et al. and Glen and Kitaev.
previous results.

\begin{comment}
Note that the problem of determining whether a planar graph is $3$-colourable is
NP-complete~\cite{Stockmeyer}, and so it is unlikely that a nice
characterization of
%$3$-colourability
$3$-colourable graphs exists. Therefore, our Theorem~\ref{thm:nt} is interesting
from the point of view of finding sufficient conditions for $3$-colourability of
planar graphs. For example, a classical result of this type is Gr\"{o}tzsh's
Theorem~\cite{Grotzsh} stating that triangle-free planar graphs are
$3$-colourable.
\end{comment}

This paper is organized as follows. In Section~\ref{sec:nt} we look at
near-triangulations, in particular proving Theorem~\ref{thm:nt} showing that a
near-triangulation that avoids $K_4$ is $3$-colourable if and only if it is
word-representable. Section~\ref{sec:tri} discusses polyomino triangulations as
a specialization of near-triangulations, proving Theorem~\ref{thm:poly} as a
corollary to Theorem~\ref{thm:nt}. Finally, Section~\ref{sec:conc} concludes by
%looking at the
stating a
problem that is left open,
namely that
of the word-representability classification of our graphs that contain $K_4$ as
an induced subgraph.
%, and suggests a conjecture for
%classifying
%these graphs.

\section{Near-triangulations}\label{sec:nt}

The main result of this section is the following:

\begin{theorem}\label{thm:nt}
 A $K_4$-free near-triangulation is $3$-colourable if and only if it is
 word-representable.
\end{theorem}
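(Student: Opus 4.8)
The plan is to treat the two implications very asymmetrically. One direction is free: a $3$-colourable graph is word-representable by Theorem~\ref{thm:3col}, using neither planarity nor $K_4$-freeness. So all the content lies in the converse, which I would prove in contrapositive form: \emph{a $K_4$-free near-triangulation $G$ that is not $3$-colourable is not word-representable}. I would lean on two known facts. First, word-representability is hereditary — deleting from a representing word all letters outside a vertex set $U$ yields a word representing the induced subgraph on $U$. Second, by~\cite{KP}, every odd wheel $W_n$ with $n>3$ fails to be word-representable. Hence it suffices to exhibit an induced copy of such a $W_n$ inside $G$. The starting point is the $3$-colourability criterion for near-triangulations (Theorem~\ref{thm:even3col}): since $G$ is not $3$-colourable, $G\notin\ent$, so $G$ has an inner vertex $v$ whose degree $d$ is odd.

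Next I would describe the wheel sitting at $v$. Because $v$ is an inner vertex, every face incident to $v$ is a triangle, so its neighbours $a_1,\dots,a_d$ form a cycle $R=a_1a_2\cdots a_da_1$, and $v$ together with the spokes $va_i$ and the rim edges $a_ia_{i+1}$ spans a $W_d$ in $G$; any further edge among these vertices is a chord of $R$. Two consequences of $K_4$-freeness constrain this picture. First, $d\neq 3$, since a degree-$3$ inner vertex gives an induced $K_4$ on $v$ and its three neighbours; hence $d$ is odd with $d\ge 5$. Second, and more generally, if $Z$ is \emph{any} cycle all of whose vertices are neighbours of $v$ and all of whose edges are edges of $G$, then $Z$ has no chord between two of its vertices at cycle-distance $2$, for such a chord would complete a $K_4$ with $v$ and the vertex between them. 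In particular, every chord of $R$ cuts $R$ into two arcs each of length at least $3$.

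Now I would extract an induced odd wheel by a descent. Keep an odd cycle $Z$ whose vertices are neighbours of $v$ and whose edges are edges of $G$, with $|Z|\ge 5$, starting from $Z=R$. If $Z$ has no chord in $G$, stop: $\{v\}\cup V(Z)$ is then an induced $W_{|Z|}$ with $|Z|$ odd and at least $5$. Otherwise pick a chord of $Z$; it splits $Z$ into two cycles $Z_1$ and $Z_2$ with $|Z_1|+|Z_2|=|Z|+2$, so exactly one of them, say $Z_1$, is odd, and because the two arcs of $Z$ have length at least $3$ (by the observation above, applied to $Z$) we get $|Z_1|\ge 4$, hence $|Z_1|\ge 5$. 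Replace $Z$ by $Z_1$ and repeat. This terminates: the edges of $Z_1$ are arc edges of $Z$ plus the chord we split on, so $Z_1$ again has all edges in $G$ and all vertices neighbours of $v$; moreover every chord of $Z_1$ is a chord of $Z$, while the chord we split on is an edge of $Z_1$, so the number of chords of the current cycle strictly decreases. The cycle it stops at yields an induced odd wheel $W_n$ with $n\ge 5$, which is not word-representable by~\cite{KP}; by heredity neither is $G$, completing the contrapositive.

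I expect the main (though mild) obstacle to be making the descent airtight: verifying that the invariant ``all vertices are neighbours of $v$, all edges lie in $G$'' is preserved, that the $K_4$-free short-chord observation therefore keeps applying so the odd split piece always has at least five vertices, and that the chord count genuinely drops at each step. Everything else — the forward implication, heredity, and the bookkeeping $|Z_1|+|Z_2|=|Z|+2$ — is routine.
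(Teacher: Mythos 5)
Your proof is correct, and at the top level it follows the same route as the paper: the forward direction is immediate from Theorem~\ref{thm:3col}, and the converse is reduced via Theorem~\ref{thm:even3col} to showing that a $K_4$-free near-triangulation outside $\ent$ contains an induced odd wheel $W_n$ with $n\ge 5$, hence is non-word-representable by~\cite{KP} together with the heredity of word-representability. The paper packages the second half as Theorem~\ref{thm:evenwr}, whose converse direction simply asserts that a word-representable $G$ ``cannot contain any odd wheel as an induced subgraph \dots so $G\in\ent$'', leaving implicit the claim that an inner vertex of odd degree forces an \emph{induced} odd wheel. That claim is not automatic, since the rim cycle around an inner vertex may carry chords (embedded outside the wheel), and your descent supplies exactly the missing justification: $K_4$-freeness forbids distance-$2$ chords (so the rim has length at least $5$ and every chord cuts off arcs of length at least $3$), any chord splits the current odd cycle into an odd piece of length at least $5$ whose chords are all inherited from the old cycle minus the chord just consumed, so the chord count strictly drops and the process terminates in a chordless odd cycle of neighbours of $v$, i.e., an induced $W_n$ with $n\ge 5$ odd. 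Your write-up is therefore, if anything, more complete than the paper's on this point. The only step worth making explicit is the one both you and the paper take for granted: that the neighbours of an inner vertex of a (simple) near-triangulation do form a cycle, because every face incident to such a vertex is an inner face and hence a triangle, and simplicity forces the rim vertices to be distinct.
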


%Note that every boundary vertex with degree $\ge 2$ in a composite wheel must be
%connected to at least two other boundary vertices.

\begin{lemma}\label{lma}
 %Let $G$ be a graph,
 %\in \ntn{n}$, <-- specification not necessary
 %$x$ be a boundary vertex in $G$ of odd degree,
 Let $x$ be a vertex of odd degree in a graph $G$, and suppose
 %there exists a path $P$, not involving $x$ itself, that connects
 %all of $x$'s neighbours.
 the neighbours of $x$ induce a path $P$.
 %If $G$ is $3$-colourable, then
 In any $3$-colouring of $G$,
%^Reviewer-suggested: above is also accurate but this is more precise as to what is meant
 the end-points of $P$ must have the same colour.
\end{lemma}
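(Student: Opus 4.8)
The plan is to exploit the interaction between the colour of $x$ and the forced $2$-colouring of its neighbourhood. Since the neighbours of $x$ induce a path $P$, the number of vertices of $P$ equals $\deg(x)$, which is odd by hypothesis; write $P = v_1 v_2 \cdots v_{2k+1}$ where consecutive $v_i$ are adjacent in $G$.

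First I would fix an arbitrary proper $3$-colouring of $G$ and let $c$ denote the colour assigned to $x$. Because $x$ is adjacent to every $v_i$, none of the $v_i$ can receive colour $c$, so the restriction of the colouring to $V(P)$ uses only the two remaining colours. Next I would observe that a proper $2$-colouring of a path is unique up to swapping the two colours: starting from $v_1$, the colour of $v_{i+1}$ is forced to be the other of the two available colours, so the colours along $P$ strictly alternate. Finally, since $P$ has an odd number of vertices, this alternation forces $v_1$ and $v_{2k+1}$ to receive the same colour, which is exactly the claim about the end-points of $P$.

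I do not anticipate a genuine obstacle here; the only point requiring a line of care is the bookkeeping that $P$ has exactly $\deg(x)$ vertices (so that ``odd degree'' really does translate into ``path on an odd number of vertices''), and the trivial but essential fact that a path on an even number of vertices would instead force the end-points to differ — which is why the parity hypothesis cannot be dropped. One could phrase the alternation step slightly more formally by an induction on $i$ showing that the colour of $v_i$ depends only on the parity of $i$ (given the colour of $v_1$), but this is routine.
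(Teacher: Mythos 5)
Your proposal is correct and follows essentially the same argument as the paper: all neighbours of $x$ must avoid $x$'s colour, the remaining two colours must alternate along the path, and the odd number of vertices (even length) forces the end-points to agree. No issues.
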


\begin{proof}
 Let $a$ and $b$ denote the end-points of $P$. Without loss of generality and
 using colours from $\{1,2,3\}$, assume that $x$ and $a$ have colours $1$ and
 $2$, respectively.
 Starting from $a$ and going along the path, the vertices must be coloured $2,3,
 2,3\dots$. Since the path is of even length, $b$ must be coloured $2$.
\end{proof}

%\begin{conjecture} The class of graphs $nt{n}$ which do not contain $W_3$ is exactly the class of convex polyomino triangulations. \end{conjecture} %no, fig 1. maybe they are
% exactly the same after all

%\begin{theorem}\label{thm:even3col}
% Graphs in the class $\entn{n}$ are $3$-colourable.
%\end{theorem}

The following theorem is proved in~\cite{DKK}, but we provide an alternative
(shorter)
proof of it here.

\begin{theorem}[\cite{DKK}]\label{thm:even3col}
 A graph $G \in \nt$ is $3$-colourable if and only if $G \in \ent$.
\end{theorem}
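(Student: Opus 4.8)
The plan is to prove the two directions separately, with the forward direction (if $G$ is $3$-colourable then $G \in \ent$) being the easy one and the converse being the substantial part. For the forward direction, suppose $G$ has a $3$-colouring and let $x$ be any inner vertex. Its neighbours form a cycle $C$ bounding the faces around $x$ (since every inner face is a triangle and $x$ is not on the outer face, the link of $x$ is a cycle). In the $3$-colouring, $x$ uses one colour, so $C$ is properly $2$-coloured, hence $C$ is an even cycle, so $\deg(x)$ is even. Thus $G \in \ent$.

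For the converse, I would argue by induction on the number of vertices (or on the number of inner faces). The base cases — near-triangulations with no inner face, i.e. a single cycle or simpler, and small configurations — are $3$-colourable trivially. For the inductive step, the idea is to find a vertex to remove or a reduction that preserves membership in $\ent$ and the near-triangulation property. A natural candidate: since $G$ is a near-triangulation, by Euler's formula it has a boundary vertex of low degree; more usefully, I would look for a boundary vertex $x$ whose neighbourhood induces a path $P$ (this is the typical situation for a boundary vertex in a near-triangulation — the inner faces at $x$ fan out along a path of neighbours). Delete $x$; the resulting graph $G' = G - x$ should again be a near-triangulation (the faces at $x$ merge into the outer face), and inner vertices of $G'$ are inner vertices of $G$, so $G' \in \ent$. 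By induction $G'$ is $3$-colourable. The task is then to extend the colouring of $G'$ to $x$: we need the neighbours of $x$, which induce the path $P$, to use at most $2$ colours so that a third is free for $x$. If $P$ has even length (odd number of vertices) this could fail in general, but here is where Lemma~\ref{lma} and the internally-even hypothesis intervene: the internal vertices of $P$ (those that were inner in $G$, or become forced) constrain the colouring so that the two endpoints of $P$ agree, collapsing the colour count on $P$ to two.

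The main obstacle — and the step I expect to require the most care — is ensuring that a suitable reduction vertex always exists and that its removal keeps us inside $\ent$ while making the colour-extension step go through. One cannot simply pick any low-degree boundary vertex; one must pick one whose link is a path and whose removal does not create a new inner vertex of odd degree or destroy the near-triangulation structure (e.g. one must avoid separating the graph or creating a cut vertex situation that breaks planarity of faces). I would handle this by choosing $x$ on the outer boundary so that, among boundary vertices, it lies in a ``corner'' of the outer cycle — concretely, a boundary vertex $x$ such that its two boundary neighbours are adjacent, or more generally using a standard ear-type argument on the outer face. When $x$ has a neighbour that becomes an odd-degree inner vertex after deletion, I would instead contract or choose a different $x$; alternatively, handle the case where $G$ is $2$-connected and the outer face is a cycle by an explicit ear decomposition. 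A cleaner alternative worth attempting: induct by removing a boundary \emph{edge} on the outer face together with the triangle it bounds, i.e. ``peeling'' one boundary triangle at a time, which automatically preserves $\ent$ since no inner vertex changes degree until it becomes a boundary vertex; then the colouring extension is governed precisely by parity conditions that Lemma~\ref{lma} turns into the statement that the forced colours are consistent. Getting the bookkeeping of this peeling correct — particularly the case when peeling a triangle exposes a vertex of degree $1$ or $2$, or disconnects the boundary — is where the real work lies.
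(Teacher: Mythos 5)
Your forward direction is fine and is essentially the paper's argument in contrapositive form: an inner vertex of odd degree is the centre of an odd wheel, which is not $3$-colourable. The converse also starts the same way as the paper (induction on the number of vertices, deleting a boundary vertex $x$ and colouring $G' = G - x$ by the inductive hypothesis), and your worry about $G'$ staying in $\ent$ is actually a non-issue: any inner vertex of $G'$ is an inner vertex of $G$ (deleting a boundary vertex only exposes vertices to the outer face), so $G' \in \ent$ automatically.

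The genuine gap is in the colour-extension step. You write that Lemma~\ref{lma} and the internally-even hypothesis ``constrain the colouring so that the two endpoints of $P$ agree, collapsing the colour count on $P$ to two.'' That is not true: the $3$-colouring of $G'$ produced by induction can perfectly well use all three colours on the neighbours of $x$, and nothing forces the endpoints of $P$ to agree, because Lemma~\ref{lma} cannot be applied to $x$ itself ($x$ is not a vertex of $G'$). What Lemma~\ref{lma} actually gives you is weaker and more local: if $v_1, v_2, v_3$ are consecutive on $P$ and $v_2$ is an \emph{inner} vertex of $G$, then in $G'$ the vertex $v_2$ has odd degree with its neighbours inducing a path ending at $v_1$ and $v_3$, so $v_1$ and $v_3$ share a colour --- i.e.\ no rainbow triple of consecutive path vertices can be centred at an inner vertex. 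A rainbow triple centred at a \emph{boundary} vertex $v_2$ is still possible, and this is where the paper's essential move comes in, which your proposal is missing entirely: such a $v_2$ is a cut-vertex of $G'$, so one can swap two colours in the connected component of $G' - v_2$ containing $v_3$, and repeat along $P$ until only two colours remain among the neighbours of $x$. Without this active recolouring step the induction does not close. (Your alternative ``peel a boundary triangle'' sketch does not repair this, and the paper also handles a second case you omit: when $x$ is a cut-vertex its neighbours need not induce a single path, and the recolouring must be applied per component.)
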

\begin{proof}
 First, if $G \notin \ent$, then it contains an odd wheel, so obviously it is
 not $3$-colourable as odd wheels are not $3$-colourable.
 
 For the opposite direction, we proceed by induction on the number of vertices,
 with the trivial base case of the single vertex graph. We take a graph $G \in
 \entn{n}$, remove a boundary vertex $v$ from it, and colour the new graph,
 called $G'$, with three colours. We will then show that re-adding $v$ preserves
 $3$-colourability. Note that $G' \in \entn{n-1}$. Using colours from $\{1,2,3\}
 $, it will be shown that in situations in which $v$ has neighbours with all
 three colours, the graph can be recoloured in some way so that $v$ does not
 require colour $4$. If
 %$v$'s neighbours
 the neighbours of $v$
 are coloured with two colours then $v$ can be coloured with the third colour
 and $3$-colourability is preserved.
 
 If
 %$v$'s neighbours
 the neighbours of $v$
 are coloured with three colours, then they can be recoloured with two colours
 as follows. There are two possible situations involving $v$ and its neighbours: 
 $(i)$ there is a path $P$ connecting all of
 %$v$'s neighbours;
 the neighbors of $v$;
 $(ii)$ $v$ is 
 a cut-vertex, meaning that
 %$v$'s neighbours
 the neighbours of $v$
 in $G'$ are in at least two
 %disjoint subsets.
 different connected components.
 %the neighbours of $v$, after $v$'s deletion, contains
 %vertices in at least two disjoint subgraphs. \dots %v is a cut-vertex
 
 For situation $(i)$, going along $P$ starting from one of its end-points, take
 the first instance in which there are $3$ consecutive vertices with different
 colours. Assume without loss of generality that they are coloured $1,2,3$, and
 call the vertices $v_1$, $v_2$ and $v_3$, respectively.
 Vertex $v_2$ must be at the boundary, otherwise Lemma~\ref{lma} can be used to
 show that $v_1$ and $v_3$ must have the same colour (in $G$, $v_2$ must be the
 centre %reviewer suggested centre over centrum
 of an even wheel, so in $G'$ it must have odd degree with a path
 connecting all its neighbours, with $v_1$ and $v_3$ being its end-points).
 Because $v_2$ is at the boundary, it is a cut-vertex in $G'$ meaning that
 removing it increases the number of connected components, each of which is a
 graph in $\ent$.
 Take the component containing $v_3$ and recolour its vertices, swapping $1$ and
 $3$, keeping $2$ the same. Thus the vertices $v_1$, $v_2$ and $v_3$ are now
 coloured $1,2,1$ respectively.
 One can continue going through $P$'s as yet unvisited vertices, looking for
 more examples of three consecutive vertices with three different colours; when
 detecting such vertices, apply the re-colouring argument again. Once all of
 $P$'s vertices are visited, they will be coloured in two colours, so $v$ can be
 coloured with the third colour.
 
 For situation $(ii)$,
 \begin{comment}simply swap $3$s with $2$s within any of the disjoint
 subgraphs that contains a neighbour of $v$ that is coloured $3$, or, if there
 is a neighbour of $v$ coloured $2$ in the same set, swap $3$ with $1$ in that
 set. If a set has three neighbours of $v$ with $3$ different colours, then use
 the method for situation $(i)$ within that set. This guarantees that
 %$v$'s neighbours
 the neighbours of $v$
 have at most two colours and $v$ can be coloured with the third.
 \end{comment}
 the recolouring argument from $(i)$ can be applied to each connected component
 of $G'$ to guarantee that
 %$v$'s neighbours
 the neighbours of $v$
 have at most two colours and $v$ can be coloured with the third.
\end{proof}

For the following theorem, recall that it follows from \cite{KP} that all odd
wheels $W_{2n+1}$ for $n \ge 2$ are non\hyp{}word\hyp{}representable,
%with the only
%word-representable odd wheel being $W_3$.
while $W_3 = K_4$ is word\hyp{}representable. % <- overfull hbox ;)

\begin{theorem}\label{thm:evenwr}
 A
 $K_4$-free
 graph $G \in \nt$
 %that avoids $K_4$
 is word-representable if and only if
 $G \in \ent$.
\end{theorem}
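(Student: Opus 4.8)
The plan is to prove the two implications separately, handling the forward one (word-representable $\Rightarrow G\in\ent$) by contraposition. The reverse implication needs nothing new: if $G\in\ent$, then $G$ is $3$-colourable by Theorem~\ref{thm:even3col}, hence word-representable by Theorem~\ref{thm:3col}. So the real content of the theorem is that a $K_4$-free near-triangulation which is not internally even cannot be word-represented.

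For the forward direction, suppose $G\in\nt$ is $K_4$-free and $G\notin\ent$; then $G$ has an inner vertex $v$ of odd degree $d$. Since $v$ does not lie on the outer face, every face of $G$ incident with $v$ is a triangle, so consecutive neighbours of $v$ in the cyclic order around $v$ are joined by an edge. Hence $G[N(v)]$ contains a spanning cycle, and in particular an odd cycle (of length $d$). Let $C$ be a shortest odd cycle in $G[N(v)]$. Then $C$ is chordless: a chord joins two non-consecutive vertices and so splits $C$ into two cycles, each of length at least $3$, whose lengths sum to $|C|+2$; exactly one of these is odd, and it is strictly shorter than $C$ while still lying in $G[N(v)]$, contradicting the choice of $C$. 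Since $V(C)\subseteq N(v)$, it follows that the subgraph of $G$ induced on $\{v\}\cup V(C)$ is precisely the wheel $W_{|C|}$, with $|C|$ odd.

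Here the $K_4$-freeness is used: since $W_3=K_4$, we have $|C|\neq 3$, so $|C|=2m+1$ with $m\ge 2$. By the consequence of~\cite{KP} recalled just before the theorem, $W_{2m+1}$ is not word-representable; and since word-representability is a hereditary property (inherited by induced subgraphs), $G$ is not word-representable. This proves the contrapositive and hence the theorem.

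I expect the only point needing care to be the structural claim that the neighbourhood of an inner vertex of a (simple) near-triangulation spans a cycle, which is where planarity and the triangulated-faces hypothesis enter; the remaining steps (a shortest odd cycle is chordless, and word-representability passes to induced subgraphs) are routine. It is worth noting that $K_4$-freeness is invoked exactly once, merely to exclude the single word-representable odd wheel $W_3$; this is precisely why the same equivalence can fail once $K_4$ is allowed as an induced subgraph, which is the case left open in the paper.
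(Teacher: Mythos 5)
Your proof is correct and takes essentially the same route as the paper: the reverse direction combines Theorems~\ref{thm:even3col} and~\ref{thm:3col}, and the forward direction reduces to exhibiting an induced odd wheel $W_{2m+1}$ with $m\ge 2$ around an inner vertex of odd degree, with $K_4$-freeness ruling out the one word-representable odd wheel $W_3$. The only difference is that you make explicit, via the shortest-odd-cycle argument, why such an induced wheel must exist even when the neighbourhood cycle has chords --- a step the paper leaves implicit --- so your write-up is a more detailed version of the same argument rather than a different one.
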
 %or redefine \ent to include allowing W_3. or exclude W_3
\begin{proof}
 If $G \in \ent$, then it follows from Theorems~\ref{thm:3col}
 and~\ref{thm:even3col} that it is word-representable.
 
 For the other direction, if $G$ is word-representable then it cannot contain
 any odd wheel as an induced subgraph %(except $W_3$??)
 as such graphs are non-word-representable, so $G \in \ent$.
\end{proof}

Theorem~\ref{thm:nt} now follows from Theorems~\ref{thm:even3col} and
\ref{thm:evenwr}.

Another observation to make as a corollary to Theorems~\ref{thm:nt}
and~\ref{thm:even3col} is the following:

\begin{corollary}\label{cor:perfect}
 A
 $K_4$-free
 near-triangulation
 %that avoids $K_4$
 is word-representable if and only if it is perfect.
\end{corollary}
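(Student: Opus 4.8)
The plan is to deduce Corollary~\ref{cor:perfect} directly from Theorem~\ref{thm:nt} by showing that, \emph{for $K_4$-free near-triangulations}, being $3$-colourable and being perfect are equivalent properties. Since Theorem~\ref{thm:nt} already identifies word-representability with $3$-colourability in this class, it then suffices to prove the chain ``perfect $\Rightarrow$ $3$-colourable $\Rightarrow$ perfect'' (the second implication restricted to our class). The easy direction is the first: a perfect graph has chromatic number equal to its clique number, and a $K_4$-free graph has clique number at most $3$, so a perfect $K_4$-free graph is $3$-colourable. This needs no planarity at all.

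For the reverse direction I would use the characterisation of $3$-colourable near-triangulations from Theorem~\ref{thm:even3col}: such a graph lies in $\ent$, i.e.\ it is internally even. The goal is to show $G \in \ent$ (and $K_4$-free) forces $G$ to be perfect, that is, every induced subgraph $H$ of $G$ satisfies $\chi(H) = \omega(H)$. Here the key observation is that every induced subgraph of a near-triangulation is itself planar and has clique number at most $3$ (inheriting $K_4$-freeness), hence $\omega(H) \le 3$; so I must show $\chi(H) \le \omega(H)$ in each case. When $\omega(H) = 3$ this is immediate from $\chi(H) \le 3$, which holds because $H$ is $3$-colourable as a subgraph of the $3$-colourable graph $G$. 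When $\omega(H) \le 2$, i.e.\ $H$ is triangle-free, I need $\chi(H) \le 2$, meaning $H$ is bipartite; equivalently, $H$ contains no odd cycle. The remaining case $\omega(H) \le 1$ (no edges) is trivial.

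The crux is therefore the triangle-free case: I must argue that a triangle-free induced subgraph of a $K_4$-free near-triangulation contains no odd cycle. The cleanest route is the Strong Perfect Graph Theorem, or rather its easy half together with a direct argument: a graph is perfect iff it contains no induced odd hole ($C_{2k+1}$, $k\ge 2$) and no induced odd antihole. Odd antiholes $\overline{C_{2k+1}}$ for $k \ge 3$ contain $K_4$ (indeed $\omega(\overline{C_{2k+1}}) = k \ge 3$), so they cannot appear as induced subgraphs of a $K_4$-free graph; and $\overline{C_5} = C_5$. Thus perfection of $G$ reduces to: $G$ has no induced $C_{2k+1}$ for $k \ge 2$. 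But an induced odd hole is a triangle-free (indeed chordless) odd cycle, and since every bounded face of $G$ is a triangle, a chordless cycle of length $\ge 4$ must be the outer face; being a single face, it can be $2$-coloured consistently with the rest — more carefully, if $G$ had an induced $C_{2k+1}$ it would be a near-triangulation in which that cycle bounds the outer face, and one checks (using internal evenness via Theorem~\ref{thm:even3col}, which gives a proper $3$-colouring of $G$) that the induced odd cycle would receive a proper $3$-colouring using only $2$ colours, a contradiction since $\chi(C_{2k+1}) = 3$. So the existence of the $3$-colouring of $G$ already rules out induced odd holes, completing the argument.

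I expect the main obstacle to be packaging the triangle-free case cleanly: one wants to avoid invoking the full Strong Perfect Graph Theorem if possible. The slickest formulation simply observes that \emph{any} $3$-colourable graph whose only obstruction-to-perfection would have to be an induced odd hole or odd antihole is automatically perfect once $K_4$-freeness kills the antiholes and $3$-colourability kills the odd holes (as $\chi(C_{2k+1}) = 3 > 2$ would be inconsistent with a proper $2$-colouring, yet a $3$-colouring of $G$ restricts to a proper colouring of the hole that, being an induced subgraph of a planar graph with the hole as outer face, actually uses at most $2$ colours). If one is willing to cite the Strong Perfect Graph Theorem outright, the proof collapses to two lines: $K_4$-free kills odd antiholes, $3$-colourable (Theorem~\ref{thm:even3col}) kills odd holes, done. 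I would present the short version and remark that $K_4$-freeness is exactly what is needed, mirroring the role it plays in Theorem~\ref{thm:nt}.
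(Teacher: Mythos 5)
Your reduction of perfection to the exclusion of induced odd holes and odd antiholes is the right instinct, and it is in fact more careful than the paper's own proof, which only verifies $\chi(G)=\omega(G)$ for the whole graph $G$ and never addresses its induced subgraphs. But the step you yourself flag as the crux --- that a $3$-colourable, $K_4$-free near-triangulation has no induced odd hole --- is exactly where the argument breaks, and it cannot be repaired. Your claim that a chordless cycle of length at least $4$ ``must be the outer face'' is false: since the bounded faces are triangles, such a cycle must \emph{enclose} inner vertices, but it need not bound a face. And the assertion that the $3$-colouring of $G$ restricts to a $2$-colouring of the hole is never justified; indeed it is false. Concretely: take the $5$-cycle $v_1v_2v_3v_4v_5$, place a triangle $u_1u_2u_3$ inside it, and add the edges $u_3v_1$, $u_3v_2$, $u_3v_3$, $u_3v_4$, $u_1v_4$, $u_1v_5$, $u_2v_5$, $u_2v_1$. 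Every bounded face of this plane graph is a triangle; the inner vertices $u_1,u_2,u_3$ have degrees $4,4,6$, so the graph lies in $\ent$ and is $3$-colourable (explicitly $u_1,v_1,v_3\mapsto 1$; $u_2,v_2,v_4\mapsto 2$; $u_3,v_5\mapsto 3$); a short case check shows it is $K_4$-free; yet its outer boundary is an induced $C_5$, so the graph is not perfect. Thus the implication ``$3$-colourable $\Rightarrow$ perfect'' fails for $K_4$-free near-triangulations, which puts the corollary itself (and the paper's one-line argument for that direction) in difficulty, not merely your proof of it. The only part that survives is the chain ``perfect $\Rightarrow$ $3$-colourable $\Rightarrow$ word-representable'', which you prove correctly and which matches the paper's (valid) converse direction.

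A smaller slip: odd antiholes $\overline{C_{2k+1}}$ contain $K_4$ only for $k\ge 4$. The graph $\overline{C_7}$ has clique number $3$, so $K_4$-freeness does not exclude it; your parenthetical ``$\omega(\overline{C_{2k+1}})=k\ge 3$'' only yields a triangle. That particular gap is fixable --- $\overline{C_7}$ is non-planar (it has only $7$ triangles, while a planar embedding of a graph with $7$ vertices and $14$ edges would require at least $8$ triangular faces), so it cannot occur inside a near-triangulation --- but the exclusion must be argued via planarity rather than via clique number.
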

A graph is {\em perfect} if the chromatic number of each of its induced
subgraphs is equal to the size of the largest clique in that subgraph.
\begin{proof}
 Any graph $\in \ent$ has a maximum clique size of $3$, as it avoids $K_4$ as an
 induced subgraph, and so since
 Theorem~\ref{thm:even3col} states that
 they are $3$-colourable,
 these graphs are perfect.
 Conversely, a near-triangulation $\notin \ent$
 %that avoids $K_4$
 %is $4$-colourable but not $3$-colourable,
 has chromatic number $4$, and if it avoids $K_4$ then it has no clique of size
 $4$, so it is not perfect.
 So a $K_4$-free near-triangulation is $3$-colourable if and only if it is
 perfect.
 
 From this and Theorem~\ref{thm:nt} we have the fact that a
 $K_4$-free near-triangulation is word-representable if and only if it is
 perfect.
\end{proof}

%old versions below
\begin{comment}
A graph is {\em perfect} if the chromatic number of each of its induced
subgraphs is equal to the size of the largest clique in that subgraph. From
the fact any graph $\in \ent$ has a maximum clique size of $3$, and as a
corollary to Theorem~\ref{thm:even3col}, we have the observation that a
near-triangulation is $3$-colourable if and only if it is perfect. And from this
and Theorem~\ref{thm:nt} we have the following:

\begin{corollary}\label{cor:perfect}
 A near-triangulation that avoids $K_4$ is word-representable if and only if it
 is perfect.
\end{corollary}
\end{comment}
\begin{comment}
Another observation to make is that a near-triangulation is $3$-colourable if
and only if it is perfect, which is clear from Theorem~\ref{thm:even3col} and
the fact that any graph $\in \ent $ has a maximum clique size of $3$ (it avoids
$K_4$). And so a near-triangulation that avoids $K_4$ is word-representable if
and only if it is perfect (Although there may or may not be ($4$-colourable)
non-word-representable near-triangulations that are perfect because they contain
$K_4$, creating a clique of size $4$).
\end{comment}

\section{Triangulations of polyominoes}\label{sec:tri}

A {\em polyomino} is a plane geometric figure formed by joining one or more
equal squares edge to edge. Letting corners of squares in a polyomino be
vertices, we can treat polyominoes as graphs. In particular, well-known {\em
grid graphs} are obtained from polyominoes in this way. We are interested in
{\em triangulations} of a polyomino, that is, subdividing each square into two
triangles; Figure~\ref{non-convex} shows an example of a polyomino
triangulation. Note that no triangulation is $2$-colourable---at least three
colours are needed to properly colour a triangulation, while four colours are
always enough to colour any triangulation, as it is a planar graph well-known to
be $4$-colourable
by the 4 Colour Theorem. %suggestion of reviewer

The main result of Akrobotu et al.~\cite{Akrobotu},
Theorem~\ref{main-thm-Akrobotu}, is related to {\em convex polyominoes}. A
polyomino is said to be {\em column convex} if its intersection with any
vertical line is convex (in other words, each column has no holes); and
similarly, a polyomino is said to be {\em row convex} if its intersection with
any horizontal line is convex. Finally, a polyomino is said to be convex if it
is row and column convex.

\begin{theorem}[\cite{Akrobotu}]\label{main-thm-Akrobotu} A triangulation of a
convex polyomino is word-representable if and only if it is $3$-colourable.
\end{theorem}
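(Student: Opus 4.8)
The plan is to deduce Theorem~\ref{main-thm-Akrobotu} directly from Theorem~\ref{thm:nt}. All that is needed is to verify that a triangulation $T$ of a convex polyomino $P$ is a \emph{$K_4$-free near-triangulation}; Theorem~\ref{thm:nt} then yields the stated equivalence of $3$-colourability and word-representability at once. (Note that the ``$3$-colourable $\Rightarrow$ word-representable'' direction is in any case immediate from Theorem~\ref{thm:3col}, so the real content is the converse.)

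First I would check that $T \in \nt$. A convex polyomino has no internal holes: if some cell were missing from the interior of $P$, the column (or row) through it would fail to be convex, since it would contain cells of $P$ both above and below the missing one. Hence $P$ is simply connected, so in the natural plane embedding of $T$ the complement of $P$ is a single unbounded face, and every bounded face of $T$ is one of the two triangular halves of a unit square of $P$. Thus each inner bounded face of $T$ is a triangle, and $T$ is a near-triangulation.

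Second---and this is the step I expect to require the most care---I would show that $T$ has no four pairwise-adjacent vertices, i.e.\ no $K_4$ subgraph (equivalently, since $K_4$ is complete, no induced $K_4$). The edges of $T$ come in only two kinds: unit horizontal or vertical grid edges (length $1$) and chosen diagonals of unit squares (length $\sqrt2$). A short case check on pairwise distances then shows that every triangle (3-clique) of $T$ is one of the half-square faces above: three grid points cannot be pairwise at distance $1$, and once one pair forms a square diagonal the only common neighbours of that pair are the remaining two corners of that square. Moreover, each edge of $T$ lies in at most two such half-square faces, and these two faces lie on opposite sides of the edge, so their apexes are not adjacent. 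Now if $w,x,y,z$ induced a $K_4$, the triangles $wxy$ and $wxz$ would both be half-square faces containing the edge $wx$, forcing $y$ and $z$ to be the apexes of the two faces on opposite sides of $wx$, hence non-adjacent---contradicting $yz$ being an edge. Therefore $T$ is $K_4$-free.

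Finally, $T$ being a $K_4$-free near-triangulation, Theorem~\ref{thm:nt} gives that $T$ is $3$-colourable if and only if it is word-representable, which is precisely Theorem~\ref{main-thm-Akrobotu}. The only genuinely non-routine part is the geometric case analysis of the previous paragraph---that every $3$-clique is a half-square face and that an edge meets at most two such faces---together with the (easy) observation pinning down where convexity is actually used, namely to exclude internal holes so that $T$ stays a near-triangulation.
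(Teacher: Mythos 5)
Your proposal is correct, but be aware that the paper itself does not prove Theorem~\ref{main-thm-Akrobotu}: it is imported from~\cite{Akrobotu} as a known result, and the paper's own contribution is the more general Theorem~\ref{thm:poly}, which it derives from Theorem~\ref{thm:nt} by observing that a polyomino triangulation without internal holes is a near-triangulation. Your argument is exactly that route, specialized back to the convex case, and it is logically sound with no circularity (the proof of Theorem~\ref{thm:nt} rests on Theorems~\ref{thm:3col}, \ref{thm:even3col} and~\ref{thm:evenwr}, not on~\cite{Akrobotu}). The one ingredient you supply that the paper leaves implicit is the $K_4$-freeness of a convex (indeed, any hole-free, all-$1\times1$-tile) polyomino triangulation; the paper only remarks that $K_4$ arises solely from the inner ``bend'' of an $n$-omino tile, i.e.\ from an inner vertex of degree $3$, which cannot occur when every cell is a unit square. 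Your geometric case analysis establishing this is correct: no three grid points are pairwise at distance $1$, so every $3$-clique contains a square diagonal and is half of a unit square; and the two half-squares sharing an edge have non-adjacent apexes --- one should just be slightly careful that, when the shared edge is itself a diagonal, the two apexes span the \emph{other} diagonal of the same square, which is a non-edge precisely because each square carries only one diagonal (not merely because the apexes lie on opposite sides). With that small point made explicit, the derivation is complete, and it arguably gives a cleaner proof of the Akrobotu et al.\ result than the original, at the cost of relying on the $3$-colourability criterion for near-triangulations (Theorem~\ref{thm:even3col}).
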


The main result of Glen and Kitaev~\cite{GK}, Theorem~\ref{main-thm-GK}, is
related to a variation of the problem involving {\em polyominoes with domino
tiles}. Polyominoes are objects formed by $1\times 1$ tiles, so that the induced
graphs in question have only (chordless) cycles of length 4. A generalization of
such graphs is allowing domino ($1\times 2$ or $2\times 1$) tiles to be present
in polyominoes, so that in the respective induced graphs (chordless) cycles of
length 6 would be allowed (in a triangulation of a domino tile, it would be
subdivided into four triangles).

\begin{theorem}[\cite{GK}]\label{main-thm-GK} A triangulation of a rectangular
polyomino with a single domino tile is word-representable if and only if it is
$3$-colourable. \end{theorem}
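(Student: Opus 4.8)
The plan is to obtain this statement directly from Theorem~\ref{thm:nt}: it suffices to show that a triangulation $T$ of a rectangular polyomino carrying a single domino tile is a $K_4$-free member of $\nt$, for then Theorem~\ref{thm:nt} delivers both implications at once. (The same reduction will yield Theorem~\ref{main-thm-Akrobotu}, and more generally Theorem~\ref{thm:poly}, once the relevant triangulations are seen to be $K_4$-free near-triangulations.)

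First I would check $T \in \nt$. Drawing the polyomino in the plane equips $T$ with a plane embedding in which every bounded face is a triangle: each $1\times 1$ tile is cut into two triangles by its chosen diagonal, and the domino region --- a $1\times 2$ rectangle whose boundary carries six lattice points and whose interior carries none --- is cut into $6-2=4$ triangles. The only face that may fail to be a triangle is the unbounded one, bounded by the boundary cycle of the polyomino. Hence $T$ is a near-triangulation.

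The substantive step is $K_4$-freeness; since $K_4$ is complete, it is enough to forbid $K_4$ as a subgraph. Suppose $\{a,b,c,d\}$ spans a $K_4$ in $T$. The edges of $T$ are pairwise non-crossing straight segments, so the four points are not in convex position, and one of them, say $d$, lies strictly inside the triangle $R$ with vertices $a,b,c$. Since $R$ is a union of faces of $T$ and contains the lattice point $d$ in its interior, Pick's theorem forces $\mathrm{area}(R)\ge 3/2$. Now every edge of $T$ has length at most $\sqrt{2}$, with the sole exception of the two long diagonals of the single domino, which have length $\sqrt{5}$. If all three sides of $R$ have length at most $\sqrt{2}$, then a short check on the lattice vectors of norm at most $\sqrt{2}$ shows $\mathrm{area}(R)\le 1/2$, a contradiction. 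Otherwise exactly one side of $R$, say $ac$, is a long domino diagonal, so $a$ and $c$ are among the six boundary vertices of the domino hexagon; one then verifies directly that the only candidates for a common neighbour $b$ of $a$ and $c$ are the two ``midpoint'' vertices of the long sides of that hexagon, and that in each admissible case the triangle $abc$ is one of the four triangles into which the domino is cut, hence a face of $T$ --- so nothing can lie strictly inside it. Either way we reach a contradiction, so $T$ is $K_4$-free.

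I expect the last case --- pinning down, by a finite but slightly fiddly check, exactly which triangles of $T$ can use a long domino diagonal as a side --- to be the main obstacle. The remaining ingredients (the reduction through Theorem~\ref{thm:nt}, the verification that $T$ is a near-triangulation, and the Pick-theorem elimination of the case of three short sides) are routine.
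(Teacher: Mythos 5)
Your reduction is correct and is exactly the route the paper intends: the paper does not prove this statement itself (it is quoted from~\cite{GK}), but it derives the generalization Theorem~\ref{thm:poly} from Theorem~\ref{thm:nt} by observing that polyomino triangulations without internal holes are near-triangulations. The one point where you add genuine content is the $K_4$-freeness check: Theorem~\ref{thm:poly} carries ``$K_4$-free'' as an explicit hypothesis, whereas Theorem~\ref{main-thm-GK} does not, so to recover the latter one must show a triangulated rectangle with a single domino cannot contain $K_4$. The paper handles this only informally, remarking that $K_4$ arises from the inner ``bend'' of a tromino (the only way an inner vertex of a polyomino triangulation can have degree $3$), which a domino lacks. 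Your Pick's-theorem argument --- a $K_4$ in a straight-line plane drawing forces a lattice point strictly inside a lattice triangle whose sides are edges of $T$, hence area at least $3/2$, which is impossible when all sides have length at most $\sqrt{2}$, and the single $\sqrt{5}$ diagonal is disposed of by the finite check on common neighbours --- is a rigorous substitute for that remark and is sound (note only one of the two long diagonals can actually occur, as they cross). So the proposal is a valid, and in this one respect more careful, version of the paper's derivation.
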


We consider a generalization of both results. As a generalization of the
%restriction to convex polyominoes,
results of Akrobotu et al.\
we consider polyominoes without {\em internal holes}. An internal hole is
defined as a gap in a polyomino which is bounded on all sides. Notice that this
restriction is much more general than the restriction in
Theorem~\ref{main-thm-Akrobotu} to convex polyominoes, but it still prohibits
any chordless cycles of length greater than $3$. However, the counter-examples
by Akrobotu et al. are unavoidable: that is, a graph $G$ containing internal
holes can result in word-representable graphs even when $G$ is
non-$3$-colourable. It turns out that the exact same arguments can be used as
in~\cite{Akrobotu} while replacing the convex restriction with the internal
holes restriction to instantly obtain a more general result. Note in particular
that the boundary shape of the polyomino is not important. See
Figure~\ref{non-convex} for an example of a non-convex polyomino triangulation
not covered by~\cite{Akrobotu}, but that our general result does cover.

\begin{figure}[h]
 \begin{center}
  \begin{picture}(8.5,10.5)

   \put(0,0){\p}	\put(2,0){\p}	\put(4,0){\p}
   \put(0,2){\p}	\put(2,2){\p}	\put(4,2){\p}	\put(6,2){\p}	\put(8,2){\p}
   \put(0,4){\p}	\put(2,4){\p}	\put(4,4){\p}	\put(6,4){\p}	\put(8,4){\p}
   \put(0,6){\p}	\put(2,6){\p}	\put(4,6){\p}	\put(6,6){\p}	\put(8,6){\p}
   \put(0,8){\p}	\put(2,8){\p}	\put(4,8){\p}	\put(6,8){\p}	\put(8,8){\p}
			\put(2,10){\p}	\put(4,10){\p}	\put(6,10){\p}	\put(8,10){\p}

  \put(0,0){\line(0,1){8}}
  \put(2,0){\line(0,1){10}}
  \put(4,0){\line(0,1){2}}
  \put(4,4){\line(0,1){6}}
  \put(6,2){\line(0,1){8}}
  \put(8,2){\line(0,1){8}}
  
  \put(0,0){\line(1,0){4}}
  \put(0,2){\line(1,0){4}}
  \put(6,2){\line(1,0){2}}
  \put(0,4){\line(1,0){4}}
  \put(6,4){\line(1,0){2}}
  \put(0,6){\line(1,0){8}}
  \put(0,8){\line(1,0){8}}
  \put(2,10){\line(1,0){2}}
  \put(6,10){\line(1,0){2}}
  
  \put(0,0){\line(1,1){2}}
  \put(4,0){\line(-1,1){2}}
  \put(2,2){\line(-1,1){2}}
  \put(6,2){\line(1,1){2}}
  \put(0,4){\line(1,1){2}}
  \put(4,4){\line(-1,1){2}}
  \put(8,4){\line(-1,1){2}}
  \put(0,6){\line(1,1){2}}
  \put(4,6){\line(-1,1){2}}
  \put(6,6){\line(-1,1){2}}
  \put(8,6){\line(-1,1){2}}
  \put(2,8){\line(1,1){2}}
  \put(6,8){\line(1,1){2}}
   
  \end{picture}

 \end{center}
\caption{A non-convex polyomino triangulation.}\label{non-convex}
\end{figure}
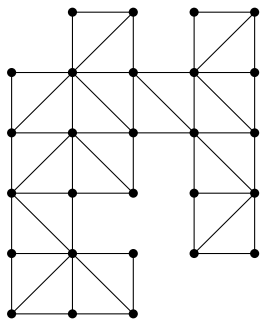

As a generalization of Glen and Kitaev's result, we consider polyominoes in
which any number of $n$-omino tiles are allowed. We call these shapes {\em
polyominoes with $n$-omino tiles}. Additionally, we consider polyominoes that
are not necessarily rectangular; as stated above, the boundary shape of the
polyomino is unimportant.
See Figure~\ref{graphs-with-n-ominoes} for an example of a polyomino with
$n$-omino tiles, without internal holes, and one of its triangulations.

\begin{figure}[h]
\begin{center}
\begin{picture}(20.5,8.5)

\put(0,0){

%\put(0,0){\p} \put(0,2){\p} \put(0,4){\p} \put(0,6){\p} 
%\put(2,0){\p} \put(2,2){\p} \put(2,4){\p} \put(2,6){\p} 
%\put(4,0){\p} \put(4,2){\p} \put(4,4){\p} \put(4,6){\p} 
%\put(6,0){\p} \put(6,2){\p} \put(6,4){\p} \put(6,6){\p} 
%\put(8,0){\p} \put(8,2){\p} \put(8,4){\p} \put(8,6){\p} 

\put(0,0){\line(1,0){8}}
\put(0,4){\line(1,0){8}}
\put(2,6){\line(1,0){6}}
\put(2,2){\line(1,0){6}}
\put(0,8){\line(1,0){2}}

\put(0,0){\line(0,1){8}}
\put(2,0){\line(0,1){4}}
\put(4,0){\line(0,1){6}}
\put(8,0){\line(0,1){2}}
\put(8,4){\line(0,1){2}}
\put(6,2){\line(0,1){4}}
\put(2,6){\line(0,1){2}}

}

\put(12,0){

\put(0,0){\p} \put(0,2){\p} \put(0,4){\p} \put(0,6){\p} 
\put(2,0){\p} \put(2,2){\p} \put(2,4){\p} \put(2,6){\p} 
\put(4,0){\p} \put(4,2){\p} \put(4,4){\p} \put(4,6){\p} 
\put(6,0){\p} \put(6,2){\p} \put(6,4){\p} \put(6,6){\p} 
\put(8,0){\p} \put(8,2){\p} \put(8,4){\p} \put(8,6){\p} 
\put(0,8){\p} \put(2,8){\p}

\put(0,0){\line(1,0){8}}
\put(0,4){\line(1,0){8}}
\put(2,6){\line(1,0){6}}
\put(2,2){\line(1,0){6}}
\put(0,8){\line(1,0){2}}

\put(0,0){\line(0,1){8}}
\put(2,0){\line(0,1){4}}
\put(4,0){\line(0,1){6}}
\put(8,0){\line(0,1){2}}
\put(8,4){\line(0,1){2}}
\put(6,2){\line(0,1){4}}
\put(2,6){\line(0,1){2}}

\put(0,0){\line(1,1){2}}
\put(0,0){\line(1,2){2}}
\put(0,2){\line(1,1){2}}

\put(2,2){\line(1,1){4}}
\put(4,2){\line(1,1){4}}
\put(0,4){\line(1,1){2}}
\put(2,4){\line(1,1){2}}
\put(2,2){\line(1,-1){2}}
\put(4,2){\line(1,-1){2}}
\put(6,2){\line(1,-1){2}}
\put(4,2){\line(2,-1){4}}
%\put(6,4){\line(1,-1){2}}
\put(0,4){\line(2,1){4}}
\put(0,4){\line(.5,1){2}}
\put(0,6){\line(1,1){2}}

}

\end{picture}
\end{center}
\caption{A polyomino with $n$-omino tiles (to the left) and one of its
triangulations (to the right); notice that it is non-word-representable as it
contains $W_7$ as an induced subgraph.}\label{graphs-with-n-ominoes}
\end{figure}
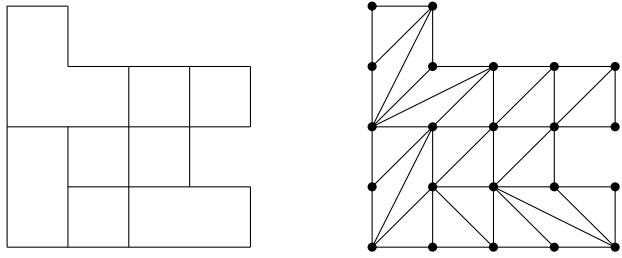
The problem then is in finding a nice characterization of triangulations of such
graphs that are word-representable, similar to Theorems~\ref{main-thm-Akrobotu}
and~\ref{main-thm-GK}. However, allowing arbitrary $n$-ominoes opens up the
possibility of having $W_3 = K_4$ as an induced subgraph, which as stated above
is word-representable, but it is not $3$-colourable; see Figure~\ref{w3-in-poly}
for the smallest such graph, containing a tromino and a square tile.
%Note that this particular graph is non word-representable.

\begin{figure}[h]
 \begin{center}
 \begin{picture}(4.5,4.5)
\put(0,4){\p} \put(2,4){\p} \put(4,4){\p} 
\put(0,2){\p} \put(2,2){\p} \put(4,2){\p} 
\put(0,0){\p} \put(2,0){\p} \put(4,0){\p} 

\put(0,0){\line(1,0){4}}
\put(2,2){\line(1,0){2}}
\put(0,4){\line(1,0){4}}
\put(0,0){\line(0,1){4}}
\put(2,2){\line(0,1){2}}
\put(4,0){\line(0,1){4}}

\put(0,0){\line(1,1){2.1}}
\put(0,2){\line(1,1){2.1}}
\put(2,0){\line(1,1){2.1}}
\put(2,4){\line(1,-1){2.1}}

\put(0,0){\line(1,2){2}}
\put(0,0){\line(2,1){4}}
 \end{picture}
 \end{center}
 \caption{The smallest
 %word-representable and
 %non-$3$-colourable
 polyomino triangulation with $n$-omino tiles
 containing $K_4$.}\label{w3-in-poly}
\end{figure}
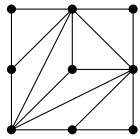

Notice that the appearance of $K_4$ here is because of the inner ``bend'' in the
tromino, as such a bend is the only situation in a polyomino triangulation in
which an inner vertex may have degree $3$. From this it is clear that the
result for polyominoes with $n$-omino tiles cannot be as elegant as the other
results, as there exist non-$3$-colourable graphs that may
%still
or may not
be word-representable. Therefore the main result of this section is the
following:
%, which uses the alternative name for $W_3$, namely the complete graph $K_4$.
%necessary? I might not need
%to consider it as a complete graph K_4

\begin{theorem}\label{thm:poly}
 A
 $K_4$-free
 triangulation of a polyomino with $n$-omino tiles and without internal holes
 %that avoids $K_4$ as an induced subgraph,
 is word-representable if and only if it is $3$-colourable.
\end{theorem}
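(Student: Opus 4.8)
The plan is to derive Theorem~\ref{thm:poly} as a straightforward corollary of Theorem~\ref{thm:nt}, by showing that a triangulation of a polyomino with $n$-omino tiles and without internal holes is a near-triangulation. Once this is established, the $K_4$-freeness hypothesis lets us invoke Theorem~\ref{thm:nt} verbatim: such a graph is $3$-colourable if and only if it is word-representable.

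First I would check that the class of graphs in question actually sits inside $\nt$. A triangulation of a polyomino, by construction, subdivides every unit square (and every cell of a larger $n$-omino tile, after first decomposing it into unit squares) into triangles, so every bounded face that lies inside the polyomino is a triangle. The only potential obstruction is a bounded face that is \emph{not} a triangle but is nonetheless ``inside'' the figure in the graph-theoretic sense --- this is precisely what an internal hole would create: a chordless cycle of length greater than $3$ bounding a non-triangulated bounded region. The hypothesis ``without internal holes'' rules this out, so the only non-triangular face is the outer face. Hence the graph lies in $\nt$. I would spell out the (easy but worth-stating) point that the boundary of the polyomino can be an arbitrary shape and this does not matter, since it only affects the outer face.

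Next, with the graph identified as a $K_4$-free member of $\nt$, I would simply apply Theorem~\ref{thm:nt} to conclude. For completeness I might also remark, as the discussion preceding the theorem already hints, that without the $K_4$-free hypothesis the statement fails: the tromino-with-a-bend example of Figure~\ref{w3-in-poly} produces an induced $K_4$, which is word-representable but not $3$-colourable, so the ``only if'' direction breaks. Thus the $K_4$-free restriction is genuinely needed and the theorem is best possible in that sense.

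I do not expect a serious obstacle here; the entire content is the reduction to near-triangulations, and the only thing requiring a little care is the precise argument that ``no internal holes'' implies ``every bounded face is a triangle.'' In particular one must be careful that an $n$-omino tile, viewed as a region, is itself decomposed into unit squares and each unit square triangulated, so that even inside a large tile there are no large bounded faces; and one must confirm that a ``bend'' or other boundary irregularity does not accidentally enclose a non-triangular region unless it is a genuine internal hole. Making that topological observation precise --- ideally by noting that any bounded face of the triangulation either is a triangle of the subdivision or else is bounded by a chordless cycle enclosing a hole --- is the crux, and after that Theorems~\ref{thm:nt} applies immediately.
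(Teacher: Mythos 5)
Your proposal is correct and matches the paper's own (very brief) argument exactly: identify the triangulation as a $K_4$-free near-triangulation, using the absence of internal holes to guarantee every bounded face is a triangle, and then invoke Theorem~\ref{thm:nt}. One small quibble: per the paper's convention an $n$-omino tile is triangulated directly as a single cell (a domino becomes four triangles) rather than first being cut into unit squares, but this does not affect the conclusion that all bounded faces are triangles.
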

%change theorem to include W_3-containing graphs?
%Or discuess W_3 separately?

It is clear to see that these types of polyomino triangulations are
near\hyp{}triangulations, with the requirement of no internal holes being equivalent
to the requirement that every bounded face is a triangle. Theorem~\ref{thm:poly}
therefore follows from Theorem~\ref{thm:nt}.

\begin{comment} % relocated to previous section, more relevant there?
Another observation to make is that a composite wheel graph is $3$-colourable if
and only if it is perfect, which is clear from the fact that any graph $\in \ent
$ has a maximum clique size of $3$ (it avoids $K_4$). And so a composite wheel
graph that avoids $K_4$ is word-representable if and only if it is perfect
(Although there may be ($4$-colourable) non-word-representable composite wheels
that are perfect because they contain $K_4$, creating a clique of size $4$).
\end{comment}

\section{Concluding remarks}\label{sec:conc}
It is still left as an open problem the word-representability classification of
near-triangulations,
and graphs in general,
that contain $K_4$ as an induced subgraph. As $K_4$ is non-$3$-colourable, a
very different approach from the one above involving colours is required to find
an elegant classification; since planar graphs are always $4$-colourable, a
classification based on colourability is not possible. %Inspired by
%Corollary~\ref{cor:perfect}, we conjecture, and leave  as an open problem, that
%any near-triangulation, that may contain the complete graph $K_4$ as an induced
%subgraph,
%is word-representable if and only if it is perfect.
% ^ not true, see figure w3-in-poly for a counter-example

\bibliographystyle{../mystyle}
\bibliography{../ref}
\end{document}